\begin{document}

\title{Squares of real conjugacy classes in finite groups \thanks{{\it Dedicated to Professor Francisco P\'erez-Monasor on the occasion of his retirement.}}\thanks{The first and second authors are supported by the Valencian Government, Proyecto PROMETEOII/2015/011. The first and the third authors are also partially supported by Universitat Jaume I, grant P11B\-2015-77.}
}
%\subtitle{Do you have a subtitle?\\ If so, write it here}

%\titlerunning{Squares of real conjugacy classes}        % if too long for running head

\author{A. Beltr\'an \and M.J. Felipe  \and C. Melchor}

\authorrunning{A. Beltr\'an \and M.J. Felipe  \and C. Melchor} % if too long for running head

\institute{Antonio Beltr\'an \and Carmen Melchor \at
              Departamento de Matem\'aticas,\\
              Universidad Jaume I, \\
              12071 Castell\'on, Spain\\
              Tel.: +34-964-72-84-25\\
              Tel.: +34-964-72-84-28\\
              %Fax: +123-45-678910\\
              \email{abeltran@mat.uji.es\\
              cmelchor@uji.es}           %  \\
%             \emph{Present address:} of F. Author  %  if needed
           \and
           Mar\'{\i}a Jos\'e Felipe \at
           Instituto Universitario de Matem\'atica Pura y Aplicada,\\
            Universidad Polit\'ecnica de Valencia, \\
            46022 Valencia, Spain\\
            Tel.: +34-963-87-70-00\\
             \email{mfelipe@mat.upv.es}
}

\date{Received: date / Accepted: date}
% The correct dates will be entered by the editor

\maketitle

\begin{abstract}
We prove that if a finite group $G$ contains a conjugacy class $K$ whose square is of the form $1 \cup D$, where $D$ is a conjugacy class of $G$,  then $\langle K\rangle$ is a solvable proper normal subgroup of $G$ and we completely determine its structure. We also obtain the structure of those groups in which the assumption above is true for all non-central conjugacy classes and when every conjugacy class satisfies that its square is the union of all central conjugacy classes except at most one.
\keywords{Finite groups \and conjugacy classes \and product of classes \and characters \and real conjugacy classes.}
%\PACS{PACS code1 \and PACS code2 \and more}
\subclass{20E45 \and 20C15 \and 20D15}
\end{abstract}

\section{Introduction}
\label{intro}

There exist many results about the structure of a finite group focused on the product of its conjugacy classes. Some of them are related to the solvability and non-simplicity of the group. Perhaps, one of the most relevant problems was stated in 1985 by Z. Arad and M. Herzog who conjectured that in a non-abelian simple group, the product of two non-trivial conjugacy classes can never be a single conjugacy class. This conjecture is still open although it has been obtained for several families of simple groups (see \cite{Moori}). A particular case of the conjecture was recently given by G. Navarro and R. Guralnick in \cite{GuralnickNavarro}. They proved that when a conjugacy class $K$ in a finite group $G$ satisfies that $K^2$ is again a conjugacy class, then $\langle K\rangle$ is a solvable (normal) subgroup of $G$. Another result concerning products of classes was given by Z. Arad and E. Fisman \cite{AradFisman} who demonstrated that if the product of two conjugacy classes of a group $G$ is exactly the union of these two classes, then $G$ is not a non-abelian simple group.\\

In this paper we present a contribution to the study of the solvability and non-simplicity of a group from the square of a conjugacy class. Suppose that $K$ is a non-trivial real conjugacy class of $G$, that is, a class satisfying that $K^{-1}=K$.  It trivially follows that $K^2$ can never be a conjugacy class unless $K$ consists of only a single central involution of $G$. However, as $K^2$ is always a $G$-invariant set, we can write $K^2= 1 \cup A$, where $A$ is the join of conjugacy classes of $G$. In this note, we study the extreme case in which $A$ is a single class, and we wonder whether one may obtain somewhat information concerning solvability inside the group $G$. The answer is affirmative: $K$ generates a solvable (normal) subgroup and we determine its structure. Notice that every class satisfying the property of the following result needs to be a real class. In fact, this is not a very unsual situation in finite groups. \\

{\bf Theorem A.} {\it Let $K=x^G$ be a conjugacy class of a finite group $G$ and suppose that $K^2= 1 \cup D$, where $D$ is a conjugacy class of $G$. Then $\langle D\rangle=[x,G]$ is either cyclic or $p$-group for some prime $p$. Since $|\langle K\rangle/\langle D\rangle|\leq 2$, then $\langle K\rangle=\langle x\rangle[x,G]$ is solvable. More precisely,
\begin{enumerate}
\item Suppose that $|K|=2$.
\begin{enumerate}
\item If $o(x)=2$, then $\langle K\rangle \cong \mathbb{Z}_{2}\times \mathbb{Z}_{2}$ and $\mathbb{Z}_{2}\cong \langle D\rangle\subseteq$ {\rm \textbf{Z}}$(G)$.
\item If $o(x)=n>2$, then $\langle K\rangle \cong \mathbb{Z}_{n}$ and $\langle D\rangle$ is cyclic.
\end{enumerate}
\item Suppose that $|K|\geq 3$.
\begin{enumerate}
\item If $o(x)=2$, then either $\langle K \rangle$ and  $\langle D\rangle$ are 2-elementary abelian groups or $\langle D\rangle$ is a $p$-group and $|K|=p^r$ with $p$ an odd prime and $r$ a positive integer.
\item If $o(x)>2$, then $\langle D\rangle$ is a $p$-elementary abelian group for some odd prime $p$. Furthermore, either $o(x)=p$ or $o(x)=2p$.
\end{enumerate}

\end{enumerate}}

Observe that in case $2$ we determine the order of the elements of $K$, which may be either $2$, $p$ or $2p$ with $p$ an odd prime.\\

All cases of Theorem A are feasible and we provide examples of each one. Our techniques for proving Theorem A are quite elementary although we make use of Glauberman's ${\rm Z}^*$ theorem \cite{Glauberman} and a result of Y. Berkovich and L. Kazarin in \cite{BerkoKaza}. Both require tools from modular representation theory, so our results are based on it as well. Other two main ingredients of the proof of Theorem A are Burnside's classification of finite 2-groups having exactly one involution and the classification of groups of order 16. We remark that we do not appeal to the Classification of Finite Simple Groups.\\

On the other hand, we prove that the property $K^2=1\cup D$ in a group $G$, stated in Theorem A, can be characterized by means of an arithmetical property of the set of irreducible characters of $G$. As usual, Irr$(G)$ will denote this set.\\

{\bf Theorem B.} {\it Let $G$ be a group and $x, d\in G\setminus\lbrace 1\rbrace$. Let $K=x^G$ and $D=d^G$. The following are equivalent:}
\begin{enumerate}[label=\alph*)]
\item $K^2=1\cup D$
\item For every $\chi \in$ {\rm Irr}$(G)$ we have $$|K|\chi(x)^2=\chi(1)^2+(|K|-1)\chi(1)\chi(d).$$
\end{enumerate}

As an application of our main result we obtain the following corollaries. The first one is related to groups in which every non-central conjugacy class satisfies the hypothesis of Theorem A and its proof is a trivial consequence. The second concerns conjugacy classes whose square is the join of classes all central except at most one. \\

{\bf Corollary C.} {\it Let $G$ be a finite group such that every non-central conjugacy class $K$ satisfies that $K^2= 1 \cup D$, where $D$ is a conjugacy class of $G$. Then $G/${\rm \textbf{F}}$(G)$ is an elementary abelian 2-group.}\\

{\bf Corollary D.} {\it Let $K$ be a conjugacy class of a finite group $G$ such that $K^2$ is union of conjugacy classes all of them central except at most one. Then $\langle K\rangle$ is solvable.}\\

Suppose now that every conjugacy class $K$ of a group $G$ satisfies that $K^2$ is a conjugacy class. It is trivial that every real element must lie in ${\bf Z}(G)$ and must have order 2. In \cite{ChillagMann}, D. Chillag and A. Mann described the groups in which every real element is a central element. Particularly, in Remark 5.5 of  \cite{ChillagMann}, the authors also announce, with omitted proof,  that any group satisfying this property is nilpotent. We include here an extension of this result which will be needed in order to study the structure of those groups in which all conjugacy classes satisfy the hypothesis of Corollary C. Note that these groups are solvable by Theorem A and we show that they are close to nilpotent groups.\\

{\bf Corollary E.} {\it Let $G$ be a finite group and let $\pi$ be a set of primes. Suppose that $K^2$ is a conjugacy class for all conjugacy class $K$ of $\pi$-elements of $G$. Then $G/${\rm \textbf{O}}$_{\pi'}(G)$ is nilpotent. In particular, if $\pi=\pi(G)$, then $G$ is nilpotent.}\\

{\bf Corollary F.} {\it Let $G$ be a finite group such that every conjugacy class $K$ satisfies that $K^2$ is union of conjugacy classes all of them central except at most one. Let $M/${\rm\textbf{F}}$(G)=$ {\rm \textbf{O}}$_{2}(G/${\rm\textbf{F}}$(G))$. Then $G/M$ is nilpotent and, consequently, $G$ is solvable with Fitting length at most 3.}

\section{Preliminary results}
We begin by stating the ${\rm Z}^*$ theorem version appearing in \cite{Huppert}.

\begin{theorem}
Let $G$ be a finite group. Suppose that $P\in $ {\rm Syl}$_{2}(G)$ and $j\in P$ such that $j^2=1\neq j$ and $P\cap \lbrace j^g \vert g\in G\rbrace=\lbrace j \rbrace$. Then {\rm \textbf{O}}$_{2'}(G)\langle j \rangle \unlhd G$.
\end{theorem}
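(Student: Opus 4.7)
This is Glauberman's celebrated $Z^{*}$ theorem, so I would not attempt a genuinely new proof; my plan is to outline the classical strategy. The first move is the standard reduction: set $M=\mathbf{O}_{2'}(G)$ and pass to $\overline{G}=G/M$. Because $|M|$ is odd, $\overline{j}$ remains an involution, $\overline{P}$ is a Sylow $2$-subgroup of $\overline{G}$, and the hypothesis $P\cap j^{G}=\{j\}$ lifts to $\overline{P}\cap \overline{j}^{\overline{G}}=\{\overline{j}\}$. If one succeeds in showing $\overline{j}\in \mathbf{Z}(\overline{G})$, then $\langle \overline{j}\rangle\unlhd \overline{G}$ and, taking preimages, $M\langle j\rangle=\mathbf{O}_{2'}(G)\langle j\rangle\unlhd G$, as desired. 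So everything reduces to the core assertion: if $\mathbf{O}_{2'}(G)=1$ and $j$ is weakly closed in $P$ with respect to $G$, then $j\in \mathbf{Z}(G)$.

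For the core step I would follow Glauberman's original route through $2$-modular representation theory, exactly the ingredient the introduction acknowledges using. Work over a field $k$ of characteristic $2$ and form the class sum $\widehat{K}=\sum_{g\in j^{G}}g\in kG$. The weak-closure condition forces a strong simplification of $\widehat{K}^{2}$ modulo $2$: whenever $g,h\in j^{G}$ with $gh\neq 1$, the products $gh$ and $hg$ pair up in the expansion of $\widehat{K}^{2}$, so only the terms with $gh=1$ can survive modulo $2$. This yields an identity of the form $\widehat{K}^{2}\equiv |j^{G}|\cdot 1\pmod{2}$, placing $\widehat{K}$ in a very rigid position inside the centre of the principal $2$-block.

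The payoff is then extracted by Brauer character theory. From the mod-$2$ identity one computes the central character $\omega_{B}(\widehat{K})$ on each $2$-block $B$, and standard orthogonality arguments comparing the values $\chi(j)$ for $\chi\in \mathrm{Irr}(G)$ with the permutation character on $j^{G}$ force $j$ to act as a scalar on every irreducible $kG$-module. Because $\mathbf{O}_{2'}(G)=1$, these modules separate the $2$-local structure of $G$ sufficiently to conclude that the normal closure of $j$ is a central $2$-subgroup, so $j\in \mathbf{Z}(G)$.

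The genuinely hard step is the character-theoretic passage in the third paragraph, where one must convert the mod-$2$ identity on $\widehat{K}$ into a clean statement about ordinary character values; this is where Brauer's decomposition matrix and block-theoretic arithmetic do the real work, and it is the portion that cannot be replaced by an elementary argument. Since the statement is quoted verbatim from Huppert's text, the natural course in the present paper is simply to cite it and apply it, and any reproof would essentially reproduce Glauberman's 1966 argument.
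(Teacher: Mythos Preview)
The paper does not prove this statement; it is quoted as Theorem~1 in the preliminaries, attributed to Glauberman's $Z^{*}$ theorem and cited from Huppert's text, and then used as a black box in Case~2.a of the proof of Theorem~A. Your closing remark---that the natural course is simply to cite the result---is exactly what the authors do, so there is no proof in the paper against which to compare your outline.

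As for the sketch itself, it is a reasonable high-level summary of Glauberman's argument. One point worth making explicit: the pairing $gh\leftrightarrow hg$ in the expansion of $\widehat{K}^{2}$ requires $gh\ne hg$ for distinct $g,h\in j^{G}$, and this is precisely what the weak-closure hypothesis delivers (two commuting conjugates of $j$ would generate a Klein four-group inside some Sylow $2$-subgroup, contradicting $P\cap j^{G}=\{j\}$); combined with $j\in\mathbf{Z}(P)$, which forces $|j^{G}|$ to be odd, one gets $\widehat{K}^{2}=1$ in $kG$. The subsequent block-theoretic step is, as you say, where the genuine difficulty lies, and your description there is necessarily schematic; but since you present this as an outline rather than a proof, that is an honest acknowledgment rather than a gap.
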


The following elementary properties will be often used in the proofs. The reader interested in further properties related to products and powers of conjugacy classes can refer to \cite{products}.

\begin{lemma}
Let $K$ be a real conjugacy class of a finite group $G$. Then
\begin{enumerate} [label=\alph*)]
\item $\langle K \rangle /\langle K^2\rangle$ is trivial or cyclic of order 2.
\item If $K^2=1\cup K$, then $\langle K\rangle$ is a minimal normal subgroup of $G$ and it is p-elementary abelian for some prime $p$.
\end{enumerate}
\end{lemma}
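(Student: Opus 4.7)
For part (a) the plan is to exploit that both $x^2$ and $xy$ lie in $K^2$ for every $x,y\in K$. Passing to the quotient $\overline{L}:=\langle K\rangle/\langle K^2\rangle$, the relation $x\cdot x\in K^2$ forces $\bar x^{\,2}=\bar 1$, while $xy\in K^2$ gives $\bar x\,\bar y=\bar 1$, whence $\bar y=\bar x^{-1}=\bar x$. Thus every element of $K$ has a common image in $\overline{L}$, an element of order dividing $2$, and since $\overline{L}$ is generated by the images of $K$ the quotient is either trivial or cyclic of order $2$.

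For part (b) I would begin by noting that $1\cup K$ is already a subgroup of $G$: closure under inversion holds because $K$ is real, closure under multiplication is the hypothesis $K^2=1\cup K$, and $1$ is an identity. Hence $\langle K\rangle=1\cup K$, a normal subgroup of $G$ on whose set of non-identity elements $G$ acts transitively by conjugation.

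From this transitivity the remaining assertions drop out quickly. All non-identity elements of $\langle K\rangle$ share a common order $n$, and a standard divisibility argument (if $1<d<n$ divides $n$, then $x^d$ is a non-identity element of strictly smaller order) forces $n$ to be a prime $p$. So $\langle K\rangle$ is a $p$-group, whence $\mathbf{Z}(\langle K\rangle)$ is non-trivial and, being characteristic in $\langle K\rangle\unlhd G$, is $G$-invariant; its intersection with $K$ is then a non-empty $G$-invariant subset of $K$, which by transitivity equals $K$. Therefore $K\subseteq\mathbf{Z}(\langle K\rangle)$ and $\langle K\rangle$ is abelian of exponent $p$, i.e.\ $p$-elementary abelian. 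Minimality is now immediate: any non-trivial $G$-invariant subgroup of $\langle K\rangle$ meets $K$, and by transitivity must contain all of $K$.

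The crux — such as it is — is to notice that $1\cup K$ is already a subgroup, so that the transitive action of $G$ on $K$ can simultaneously force commutativity (through the centre of $\langle K\rangle$) and minimal normality; no step beyond this observation seems to present a genuine obstacle.
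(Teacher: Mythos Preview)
Your proof is correct and follows essentially the same route as the paper. For part (a), the paper argues via $xy^{-1}\in K^2$ (using realness to write $y^{-1}\in K$) to conclude that all cosets $xN$ coincide and have square trivial; your variant uses $xy\in K^2$ and $x^2\in K^2$ directly, which in fact shows that realness is not even needed for (a). For part (b), the paper merely says the result ``trivially follows'' from $K^2$ being a subgroup whose non-trivial elements are all conjugate; your argument spells out exactly the standard justification the paper is tacitly invoking (prime exponent, centre of a $p$-group, transitivity forcing $\mathbf{Z}(\langle K\rangle)=\langle K\rangle$, and minimality).
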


\begin{proof}
We write $N=\langle K^2 \rangle$ and consider the factor group $\langle K \rangle/N$ which is generated by the set of elements $xN$ with $x\in K$. But, if $x, y \in K$ we know that $xy^{-1} \in K^2 \subseteq N$, so $xN=yN$, and consequently $\langle K\rangle/N$  is generated by just one element, say $xN$, with $x\in K$, and $(xN)^2 =N$. Thus, a) is proved. Statement b) trivially follows because $K^2$ is a subgroup of $G$ and all non-trivial elements of $K^2$ are conjugate, so all of them have the same order.\qed
\end{proof}

As we have already indicated in the Introduction, we use the following result of Berkovich and Kazarin of \cite{BerkoKaza}, which is based on the well-known Kazarin's Theorem (see for instance \cite{Huppert}), which asserts that any conjugacy class of prime-power size generates a solvable normal subgroup.

\begin{lemma}[Lemma 3 of \cite{BerkoKaza}]
Let $G$ be a finite group and let $x\in G$. If $|x^G|$ is a power of $q\in \pi(G)$, then $(\langle x\rangle^{G})'$ is a $q$-subgroup. In particular, $\langle x\rangle^G/ ${\rm \textbf{O}}$_{q}(\langle x\rangle^G)$ is an abelian $\pi(o(x))$-group.
\end{lemma}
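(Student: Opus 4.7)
\emph{Plan for the proof.} Set $N:=\langle x\rangle^G=\langle x^G\rangle$, which is normal in $G$ by construction. The starting observation is that $|x^N|=[N:C_N(x)]$ divides $|x^G|=q^a$, hence is a power of $q$; by Lagrange, every $q'$-subgroup of $N$ must lie in $C_N(x)$. Because $N$ is normal in $G$, we have $C_N(x^g)=C_N(x)^g$ for every $g\in G$, so $[N:C_N(x^g)]$ is the same power of $q$. Consequently every $q'$-element of $N$ centralizes every generator $x^g$ of $N$ and therefore lies in $\mathbf{Z}(N)$.

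From this the structure of $N$ falls out. Central $q'$-elements commute, so products of them are again $q'$-elements, and the set $L$ of all $q'$-elements of $N$ is a central $q'$-subgroup of $N$. Using the unique $q$- and $q'$-part decomposition $y=y_qy_{q'}$ of each $y\in N$, the quotient $N/L$ consists of $q$-elements and is therefore a $q$-group; hence $N$ possesses a unique (normal) Sylow $q$-subgroup $Q=\mathbf{O}_q(N)$ and splits as $N=L\times Q$. In particular $N'=Q'\le Q$ is a $q$-subgroup, which is the first claim. For the second, $N/\mathbf{O}_q(N)\cong L$ is abelian and is generated by the $G$-conjugates of $\bar{x}:=x\mathbf{O}_q(N)$, each of order dividing $o(x)$; since the exponent of a finite abelian group equals the least common multiple of the orders of any generating set, $\exp(N/\mathbf{O}_q(N))$ divides $o(x)$, so $N/\mathbf{O}_q(N)$ is an abelian $\pi(o(x))$-group.

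The only conceptual step is the first: recognising that $|x^G|$ being a $q$-power forces all $q'$-subgroups of $N$ into $C_N(x)$, and that, by normality of $N$ in $G$, this property is inherited uniformly by every $G$-conjugate of $x$, hence by every generator of $N$. Once that is established, the remainder is the standard splitting of a finite group with a central Hall $q'$-subgroup; Kazarin's solvability theorem for $N$ is reassuring but not actually invoked in the argument.
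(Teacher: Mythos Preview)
Your argument contains a genuine gap at the very first step. From $[N:C_N(x)]$ being a power of $q$ you conclude ``by Lagrange'' that \emph{every} $q'$-subgroup of $N$ lies in $C_N(x)$. Lagrange gives nothing of the sort: it only tells you that the $q'$-part of $|N|$ divides $|C_N(x)|$, so $C_N(x)$ is large enough to \emph{contain} a Hall $q'$-subgroup of $N$ (once one knows such subgroups exist), not that it contains all $q'$-elements. A concrete counterexample: take $G=S_3$, $x=(12)$, so $|x^G|=3$ and $q=3$. Here $N=\langle x\rangle^G=S_3$, and the $3'$-element $(13)\in N$ does \emph{not} centralise $x$; in particular it is not in $\mathbf{Z}(N)=1$. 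Thus your claim that ``every $q'$-element of $N$ centralises every generator $x^g$'' fails already in this smallest nontrivial instance, and the subsequent splitting $N=L\times Q$ collapses.

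The paper does not supply its own proof of this lemma (it is quoted from Berkovich--Kazarin), but the standard route really does require Kazarin's solvability theorem, contrary to your final remark. One way to organise it: Kazarin gives that $N$ is solvable; passing to $N/\mathbf{O}_q(N)$ one may assume $\mathbf{O}_q(N)=1$, so $\mathbf{F}(N)$ is a $q'$-group and $C_N(\mathbf{F}(N))\le\mathbf{F}(N)$. Since $\mathbf{F}(N)\trianglelefteq N$, the index $|x^{\mathbf{F}(N)}|$ divides $|x^N|$, hence is a $q$-power; but it also divides the $q'$-number $|\mathbf{F}(N)|$, so $x$ centralises $\mathbf{F}(N)$ and therefore $x\in\mathbf{F}(N)$. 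The same holds for every $x^g$, whence $N=\mathbf{F}(N)$ is a nilpotent $q'$-group, and then the index argument forces $x\in\mathbf{Z}(N)$, so $N$ is abelian. Your instinct to look at $[N:C_N(x)]$ is correct, but it must be played off against \emph{normal} $q'$-subgroups (where orbit sizes are controlled), not arbitrary ones; solvability is what produces enough such normal subgroups.
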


The following result, to which we have referred at the beginning of the Introduction, will be used in the proof of Corollaries C and D. We remark that we do need it to prove Theorem A.

\begin{theorem}[Theorem A of \cite{GuralnickNavarro}]
Let $G$ be a finite group, let $x\in G$, and let $K=x^G$ be the conjugacy class of $x$ in $G$. Then the following are equivalent:
\begin{enumerate}[label=\alph*)]
\item $K^{2}$ is a conjugacy class of $G$.
\item $K=x[x,G]$ and {\rm \textbf{C}}$_{G}(x)=$ {\rm \textbf{C}}$_{G}(x^2)$.
\end{enumerate}
In this case, $[x,G]$ is solvable. Furthermore, if $x$ has order a power of a prime $p$, then $[x,G]$ has a normal $p$-complement.
\end{theorem}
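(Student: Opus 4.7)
The plan is to treat the equivalence (a)$\Leftrightarrow$(b) first and then the structural claim about $[x,G]$. The direction (b)$\Rightarrow$(a) is essentially computational: writing $N:=[x,G]$, the hypothesis $K=xN$ forces $N\trianglelefteq G$, because $xN$ being conjugation-invariant gives $x^{g}N^{g}=xN$ as sets, and two left cosets of subgroups of the same order can coincide only when the subgroups themselves do. In particular $N$ is stabilized by $x$, so $Nx=xN$ and $K\cdot K=(xN)(xN)=x(Nx)N=x^{2}N$. The inclusion $(x^{2})^{G}\subseteq K^{2}$ is automatic via $(x^{g})^{2}=(x^{2})^{g}$, and the centralizer hypothesis gives $|(x^{2})^{G}|=[G:\mathbf{C}_{G}(x)]=|K|=|N|=|K^{2}|$, forcing $K^{2}=(x^{2})^{G}$, a single conjugacy class.

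The harder direction (a)$\Rightarrow$(b) begins from $K^{2}=(x^{2})^{G}$ (forced because $x\cdot x=x^{2}\in K^{2}$). In the class algebra one has $\widehat{K}^{2}=(|K|^{2}/|D|)\widehat{D}$ with $D=(x^{2})^{G}$; evaluating the central characters produces the identity $\chi(x)^{2}=\chi(1)\chi(x^{2})$ for every $\chi\in\mathrm{Irr}(G)$. The decisive step is then to derive from this identity, together with the column-orthogonality relations, the equality $\mathbf{C}_{G}(x)=\mathbf{C}_{G}(x^{2})$, equivalently $|K|=|D|$. Once this is in hand, the squaring map $K\to D$, $y\mapsto y^{2}$, is a $G$-equivariant bijection whose fiber over $x^{2}$ reduces to $\{x\}$; together with the always-valid inclusion $K\subseteq x\,[x,G]$ (via $x^{g}=x\cdot[x,g^{-1}]$), a counting argument then identifies $K$ with the full coset $x[x,G]$ and gives $[x,G]\trianglelefteq G$.

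With $K=xN$ and $N=[x,G]\trianglelefteq G$ secured, solvability of $\langle K\rangle=\langle x\rangle N$ reduces to solvability of $N$, which I would prove by induction on $|G|$: the hypothesis ``$K^{2}$ is a class'' descends to any quotient $G/M$ (since $K^{2}M/M=DM/M$ remains a single class), so a minimal counterexample would force $N$ to be a minimal normal non-abelian subgroup of $G$, and one would then apply Kazarin's theorem (Lemma 3 of the excerpt) to the image of $x^{G}$ in a suitable section to extract a contradiction. The refinement that $[x,G]$ has a normal $p$-complement when $o(x)=p^{a}$ would then follow, for $p=2$, from Glauberman's $\mathrm{Z}^{*}$ theorem (Theorem 1) applied to a carefully chosen involution in $N$, and for odd $p$ from a transfer/focal-subgroup argument. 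The main obstacle, in my estimation, is the passage from the character identity $\chi(x)^{2}=\chi(1)\chi(x^{2})$ to the centralizer equality: Cauchy--Schwarz on the column-orthogonality sums gives only $|\mathbf{C}_{G}(x)|^{2}\le|G|\cdot|\mathbf{C}_{G}(x^{2})|$, and sharpening this to equality appears to require either a careful case analysis of when Cauchy--Schwarz is tight or a direct combinatorial count of pairs $(a,b)\in K\times K$ with $ab=x^{2}$ isolating the diagonal contribution.
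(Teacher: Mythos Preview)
The paper does not prove this statement; it is quoted from \cite{GuralnickNavarro} as background, with two remarks: the equivalence (a)$\Leftrightarrow$(b) ``can be easily obtained without using characters,'' while the solvability of $[x,G]$ ``needs the Classification of Finite Simple Groups.'' Your proposal should be measured against those two remarks.

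On the equivalence, your (b)$\Rightarrow$(a) is fine, but your (a)$\Rightarrow$(b) takes a detour through the character identity $\chi(x)^2=\chi(1)\chi(x^2)$ and then, as you yourself flag, gets stuck at deducing $\mathbf C_G(x)=\mathbf C_G(x^2)$. That detour is unnecessary; the elementary argument the paper alludes to is this. With $D:=(x^2)^G$ one has $xK\subseteq K^2=D$, so $|K|\le|D|$, while $\mathbf C_G(x)\subseteq\mathbf C_G(x^2)$ gives $|D|\mid|K|$; hence $|K|=|D|$ and the centralizer equality follows at once. Then $xK=D=K^2$ yields $(x^{-1}K)K=K$. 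Writing $S:=x^{-1}K=\{[x,g]:g\in G\}$ and iterating gives $[x,G]\,K=K$, so $K$ is a union of cosets of $[x,G]$ and $|[x,G]|$ divides $|K|$; since also $|K|=|S|\le|[x,G]|$ and $K\subseteq x[x,G]$, one concludes $K=x[x,G]$. No characters, no Cauchy--Schwarz.

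On the structural part, your sketch is a genuine gap rather than an alternative route. The paper explicitly records that Guralnick--Navarro's proof of the solvability of $[x,G]$ relies on the Classification, and neither Kazarin's prime-power-class theorem (Lemma~2 here) nor Glauberman's $\mathrm Z^*$ theorem is enough: in a minimal counterexample with $N=[x,G]$ a non-abelian minimal normal subgroup there is no reason for $|x^G|$ to be a prime power, nor for an isolated involution to appear. The normal $p$-complement statement in \cite{GuralnickNavarro} is likewise obtained through the solvability result, not by an independent transfer argument, so your proposed split into ``$\mathrm Z^*$ for $p=2$, transfer for odd $p$'' does not stand on its own.
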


The original result of \cite{GuralnickNavarro} includes one more assertion related to Character Theory but we do not use it in this paper. Furthermore, the proof of the equivalence between a) and b) in the above theorem, although is omitted here, can be easily obtained without using characters. The solvability in Theorem 2, however, needs the Classification of Finite Simple Groups.\\

Finally, we will also use a Burnside's classic result whose proof can be found in \cite{Michler} for instance.

\begin{lemma}[Theorem 1.2.6 of \cite{Michler}]
A non-cyclic 2-group $P$ has only one involution if and only if $P$ is a generalized quaternion group.
\end{lemma}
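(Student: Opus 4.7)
The plan is to prove the two implications separately. For the ``if'' direction, I would use the standard presentation
$Q_{2^n}=\langle a,b\mid a^{2^{n-1}}=1,\ b^2=a^{2^{n-2}},\ b^{-1}ab=a^{-1}\rangle$
and verify directly that $a^{2^{n-2}}$ is the only involution. Every element of $Q_{2^n}$ has the form $a^i$ or $a^i b$, and the computation $(a^ib)^2 = a^i(b a^i b^{-1}) b^2 = a^i a^{-i}\, a^{2^{n-2}} = a^{2^{n-2}}$ shows that no element of the coset $\langle a\rangle b$ has order $2$, while $\langle a\rangle$ contains exactly one element of order $2$.

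For the converse, I would argue by induction on $|P|$. The base case $|P|=8$ follows by inspection of the five groups of order $8$, among which only $Q_8$ is non-cyclic and has a unique involution. For the inductive step let $z$ be the unique involution of $P$; since $\{z\}$ is invariant under conjugation, $z\in \mathbf{Z}(P)$. Any maximal subgroup $M$ of $P$ is normal of index $2$ and still has $z$ as its only involution, so by induction $M$ is cyclic or generalized quaternion.

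The goal is to produce a cyclic subgroup of index $2$ in $P$ itself, after which the classical classification of $2$-groups possessing a cyclic maximal subgroup (cyclic, dihedral, semidihedral, the modular maximal-cyclic group $M_{2^n}$, or generalized quaternion) completes the argument. Dihedral and semidihedral groups contain non-central involutions, and $M_{2^n}$ has involutions both inside and outside its cyclic maximal subgroup, so $Q_{2^n}$ is the only possibility compatible with having a unique involution.

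The main obstacle is therefore ruling out the scenario in which every maximal subgroup of $P$ is generalized quaternion. For $|P|=16$ one can invoke the classification of groups of order $16$ directly. For $|P|\geq 32$, fixing such a maximal subgroup $M\cong Q_{2^{n-1}}$ with $n-1\geq 4$, the unique cyclic subgroup $C$ of $M$ of index $2$ is characteristic in $M$ and hence normal in $P$; analyzing the conjugation action of an element $t\in P\setminus M$ on $C$ together with the constraint $t^2\in M$ and the uniqueness of $z$ then forces either a second involution to appear---contradicting the hypothesis---or the existence of a cyclic maximal subgroup of $P$, which closes the induction.
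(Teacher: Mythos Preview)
The paper does not actually supply a proof of this lemma: it is stated as a classical result of Burnside and the reader is referred to \cite{Michler} for a proof. So there is no ``paper's own proof'' to compare against beyond the citation.

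Your sketch follows the standard classical route and is sound in outline. The ``if'' direction is correct as written. For the converse, the reduction to the case where some maximal subgroup is cyclic, followed by the classification of $2$-groups with a cyclic maximal subgroup, is the usual argument and is fine. The one place where your write-up is genuinely only a sketch is the last paragraph, where you assume every maximal subgroup is generalized quaternion and say that ``analyzing the conjugation action \dots\ forces either a second involution or a cyclic maximal subgroup.'' This step can indeed be completed: with $C$ the (characteristic) cyclic index-$2$ subgroup of a maximal $M\cong Q_{2^{n-1}}$ (so $C\trianglelefteq P$), any $b\in M\setminus C$ inverts $C$; picking $t\in P\setminus M$, the maximal subgroup $\langle C,t\rangle$ is again generalized quaternion by hypothesis, so $t$ also inverts $C$; hence $u=tb^{-1}$ centralises $C$ and lies outside $C$, and $\langle C,u\rangle$ is an abelian subgroup properly containing $C$. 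It cannot equal $P$ (else $P$ is abelian non-cyclic with a unique involution, impossible), so it is an abelian maximal subgroup of $P$, contradicting the assumption that all maximal subgroups are generalized quaternion. You may want to spell this out rather than leave it as ``analyzing the conjugation action,'' but the idea is correct.
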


\section{Proofs}
We start by proving the equalities concerning  commutators that appear in Theorem A. Recall that the conjugacy classes $K$ and $D$ of the statement are real classes.

\begin{lemma}
Let $K=x^G$ be a conjugacy class of a finite group $G$ and suppose that $K^2= 1 \cup D$, where $D$ is a conjugacy class of $G$. Then $\langle D\rangle=[x,G]$ and $\langle K\rangle=\langle x\rangle[x, G]$.
\end{lemma}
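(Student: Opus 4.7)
My plan is to prove both equalities by double containment, using only the hypothesis $K^{2}=1\cup D$ and its immediate consequence that $K$ is real (since two elements of $K$ multiply to $1$, $K$ must contain the inverse of $x$, so $K=K^{-1}$).

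First I would handle the inclusion $[x, G] \subseteq \langle D \rangle$. The idea is to exploit realness to rewrite each generating commutator as a product of two elements of $K$: given $g \in G$, pick $g_{0}$ with $x^{g_{0}} = x^{-1}$, and compute
\[ [x, g] = x^{-1} x^{g} = x^{g_{0}} x^{g} \in K^{2} = 1 \cup D \subseteq \langle D \rangle. \]

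For the reverse inclusion, my key preliminary step is to establish that $x^{2} \in [x, G]$. This is again supplied by realness: with $g_{0}$ as above, $[x, g_{0}] = x^{-1} x^{g_{0}} = x^{-2}$, whence $x^{2} \in [x, G]$. Then for a generic $d \in D$, I would write $d = x^{a} x^{b}$ and use the identity $x^{a} = x \cdot [x, a]$ to obtain
\[ d = x[x,a]\cdot x[x,b] = x^{2} \cdot [x,a]^{x} \cdot [x,b], \]
and then identify $[x, a]^{x}$ with $[x, ax]$ via the standard commutator identity $[x, ab] = [x, b][x, a]^{b}$ specialised at $b = x$. Since all three factors now lie in $[x, G]$, so does $d$.

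Once $\langle D\rangle = [x, G]$ is in hand, the second equality falls out easily: $\langle D \rangle$ is normal in $G$ (as $D$ is a single conjugacy class), so $\langle x\rangle[x, G]$ is already a subgroup; it contains every $x^{g}=x\cdot[x,g]$, hence contains $\langle K\rangle$, and the reverse inclusion is automatic since $\langle K\rangle$ contains both $x$ and $\langle D \rangle = [x, G]$. The main obstacle I anticipate is exactly the intermediate claim $x^{2} \in [x, G]$: without it the three-factor decomposition of $d$ cannot be absorbed into $[x, G]$, and it is precisely the realness forced by $1 \in K^{2}$ that rescues the step.
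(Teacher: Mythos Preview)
Your proof is correct and follows essentially the same strategy as the paper's, the only noticeable difference being in the direction $\langle D\rangle\subseteq [x,G]$. The paper does this in one stroke: for $y\in x_iK$ it uses realness to write the second factor as $(x_i^{-1})^g$, so that $y=x_i(x_i^{-1})^g=[x_i^{-1},g]\in[x_i^{-1},G]=[x,G]$; thus every element of $K^2$ is exhibited as a \emph{single} commutator. Your route---first securing $x^2\in[x,G]$ via the inverting element $g_0$, then decomposing $d=x^2\cdot[x,a]^x\cdot[x,b]$ and absorbing each factor---is a step longer but equally valid, and it has the virtue of making explicit exactly where realness enters. For the equality $\langle K\rangle=\langle x\rangle[x,G]$ the paper simply calls it ``standard'', whereas you spell out the double inclusion; that extra detail is welcome.
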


\begin{proof}
If $K=\lbrace x_{1}, \ldots, x_{n}\rbrace$, it is clear that $K^{2}=x_{1}K\cup \cdots \cup x_{n}K$. Let $y \in x_{i}K$. Since $K$ is real, then $y=x_{i}g^{-1}x_{i}^{-1}g \in [x_{i}^{-1}, G]=[x_{i}, G]$ for some $g\in G$. Also, if $i\neq j$, then $x_{j}=x_{i}^{h}$ for some $h\in G$. Thus, $[x_{j}, G]=[x_{i}^{h}, G]=[x_{i}, G]^{h}=[x_{i}, G]$. Consequently, $K^{2}\subseteq [x, G]$ and $\langle D\rangle\subseteq [x, G]$. On the other hand, since any element $[x,t]$ lies in $K^2$ for all $t\in G$, then $[x, G]\subseteq \langle K^{2}\rangle=\langle D\rangle$ and hence, $\langle D\rangle=[x, G]$. The equality $\langle K\rangle=\langle x\rangle[x, G]$ is standard, since the normal closure $\langle x\rangle^{G}$ of a subgroup $\langle x\rangle$ is equal to $\langle x\rangle[x, G]$.\qed
\end{proof}

We are ready to prove our main result.

\begin{proof}[of Theorem A]
The proof is divided into two cases: when $|K|=2$ and when $|K|\geq 3$. \\

Case 1: Suppose that $|K|=2$.\\

Case 1.a. Let $K=\lbrace x, x^g\rbrace$ with $g\in G$. If $o(x)=2$, then $K^{2}=1 \cup \lbrace xx^g, x^gx\rbrace$. But observe that since ${\rm \textbf{C}}_{G}(x)\unlhd G$, we have ${\rm \textbf{C}}_{G}(x)={\rm \textbf{C}}_{G}(x^{g})$, so $xx^g=x^gx$. Thus, $K^{2}=1 \cup \lbrace xx^g\rbrace$, so $xx^g\in {\rm \textbf{Z}}(G)$ and $\langle D \rangle=\langle xx^g\rangle\subseteq {\rm \textbf{Z}}(G)$. Furthermore, $\langle K \rangle=\langle x, x^g\rangle=\langle x\rangle\times \langle x^g\rangle\cong \mathbb{Z}_{2}\times \mathbb{Z}_{2}$ and $|\langle K\rangle/\langle D\rangle|=2$. So, $1.a$ is proved.\\

Case 1.b. If $o(x)=n>2$, then $K=\{ x, x^{-1}\}$ and as a consequence, $\langle K \rangle=\langle x\rangle$, which is cyclic of order $n$ and $\langle D\rangle=\langle x^2\rangle$. If $n$ is odd, then $\langle D\rangle=\langle x\rangle=\langle K\rangle$. If $n$ is even, then $|\langle K\rangle/\langle D\rangle|=2$.  Therefore, $1.b$ is proved.\\

Case 2: Suppose that $|K|\geq 3$.\\

Case 2.a. Suppose that $o(x)=2$ and let $t\in D$. We distinguish two cases depending on the order of $t$. Suppose that $o(t)=2$. We set $K=\lbrace x_{1}, \ldots, x_{s}\rbrace$ and we have $K^{2}=1\cup \lbrace x_{i}x_{j}\vert i\neq j \rbrace$. Since $o(x_{i}x_{j})=2$ for every $1 \leq i, j \leq s$ with $i \neq j$, then $1=x_{i}^{2}x_{j}^{2}=x_{i}x_{i}x_{j}x_{j}=x_{i}x_{j}x_{i}x_{j}$, so $x_{i}$ and $x_{j}$ commute. Consequently, $\langle K \rangle$ is generated by pairwise commuting involutions, so $\langle K\rangle$ is 2-elementary abelian (and $\langle D\rangle$ too) and we obtain the first assertion of $2.a$.\\

Suppose now that $o(t)>2$. Observe that any two distinct elements of $K$ do not commute. Otherwise, the order of $t$ would be necessarily 2. As a consequence, each $x_{i}\in K$ acts via conjugation on $K$ in such a way that it fixes only the element $x_{i}$ and permutes in pairs the elements of $K\setminus \lbrace x_{i}\rbrace$. As a result, $|K|$ is odd. This implies that $x \in {\rm \textbf{Z}}(P)$ for some $P\in {\rm Syl}_{2}(G)$. Therefore, $P \cap K=\lbrace x\rbrace$ and we deduce that ${\rm \textbf{O}}_{2'}(G)\langle x \rangle \unlhd G$ by Theorem 1.\\

The Frattini argument gives $G={\rm \textbf{N}}_{G}(\langle x\rangle){\rm \textbf{O}}_{2'}(G)$. But observe that ${\rm \textbf{N}}_{G}(\langle x\rangle)={\rm \textbf{C}}_{G}(x)$ because $o(x)=2$. Thus, $G={\rm \textbf{C}}_{G}(x){\rm \textbf{O}}_{2'}(G)$. As a result, $$K^{2}=\lbrace x^{-1}x^{g} \vert x\in K, g\in G\rbrace=\lbrace [x,g] \vert x\in K, g\in {\rm \textbf{O}}_{2'}(G) \rbrace\subseteq {\rm \textbf{O}}_{2'}(G).$$ Then $D\subseteq {\rm \textbf{O}}_{2'}(G)$ and in particular, $|\langle D\rangle|$ is odd.\\

Now, we prove that $|K|$ is a power of an odd prime. Since $x\not \in {\rm \textbf{Z}}(G)$, we can take an odd prime $p$ dividing $|{\rm \textbf{O}}_{2'}(G): {\rm \textbf{C}}_{{\rm \textbf{O}}_{2'}(G)}(x)|$. As ${\rm \textbf{O}}_{2'}(G)$ has odd order, then the number of Sylow $p$-subgroups of ${\rm \textbf{O}}_{2'}(G)$ is also odd, and hence $x$, which acts on this set of subgroups, must fix one of them. Let $P\in$ Syl$_{p}({\rm \textbf{O}}_{2'}(G))$ such that $P^x=P$. Thus, $[x, P]\subseteq P$. Now, if $[x,P]=1$ this contradicts that $p$ divides $|{\rm \textbf{O}}_{2'}(G): {\rm \textbf{C}}_{{\rm \textbf{O}}_{2'}(G)}(x)|$, so $[x,g]$ is a non-trivial $p$-element for some $g\in P$. Therefore, $[x,g]=xx^g\in K^{2}$ is a $p$-element lying in $D$, so all elements of $D$ are $p$-elements and in particular, the prime $p$ is unique. Moreover, $$p^m=|{\rm \textbf{O}}_{2'}(G): {\rm \textbf{C}}_{{\rm \textbf{O}}_{2'}(G)}(x)|=|{\rm \textbf{C}}_{G}(x){\rm \textbf{O}}_{2'}(G): {\rm \textbf{C}}_{G}(x)|=|G:{\rm \textbf{C}}_{G}(x)|=|K|$$ for some $m\geq 1$, as we wanted to prove. By applying Lemma 2, we deduce that $\langle K\rangle /{\rm \textbf{O}}_{p}(\langle K\rangle)$ is an abelian $2$-group. As a consequence, ${\rm \textbf{O}}_{2'}(\langle K\rangle)={\rm \textbf{O}}_{p}(\langle K\rangle)$. By Lemma 1(a), we have that $\langle K\rangle/\langle D\rangle$ is necessarily cyclic of order 2 and then $\langle D\rangle={\textbf{O}}_{p}(\langle K\rangle)$. The second assertion of case 2.a is complete. \\

Case 2.b. Suppose that $o(x)>2$. We prove first that $|D|=|K|$. We know that $|K|\leq |K^2|=1+|D|$ and $D=(x^2)^{G}$. Note that $|D|$ divides $|K|$ because ${\rm \textbf{C}}_{G}(x)\subseteq {\rm \textbf{C}}_{G}(x^2)$. Thus, either $|D|=|K|$ or $|D|\leq |K|/2$. However, if $|D|\leq |K|/2$, the first inequality implies that $|K|\leq 1+|K|/2$, so $|K|\leq 2$, a contradiction. Consequently, $|K|=|D|$ as wanted. Now, notice that $xK\cup x^{-1}K\subseteq K^2$ and we claim that $xK\neq x^{-1}K$. Indeed, if $xK=x^{-1}K$, then $x^2K=K$. Hence for all $g\in G$, it follows that $(x^g)^2K=(x^2K)^g=K^g=K$, which means that $DK=K$. As a result, $\langle D\rangle K=K$. This implies that $K$ is union of right classes of $\langle D\rangle$ and then $|\langle D \rangle|$ divides $|K|$. But this is a contradiction because $|K|=|D|<|\langle D\rangle|$.\\

We have proved that $xK\neq x^{-1}K$ with $xK\cup x^{-1}K\subseteq K^2$. Since $|K^2|=|K|+1$ and $|K|=|xK|=|x^{-1}K|$, there exists only just one element $z\in xK\setminus x^{-1}K$. Moreover, it is easy to prove that $z^{-1}$ is the only element contained in $x^{-1}K\setminus xK$ (notice that $z$, $z^{-1}\in D$). Therefore, $K^2$ can be decomposed as $$K^{2}=xK\cup x^{-1}K=(xK\cap x^{-1}K)\cup \lbrace z\rbrace\cup \lbrace z^{-1}\rbrace.$$

From the fact that $(xK)(x^{-1}K)=K^2$ and $K^{4}=(1\cup D)(1\cup D)=K^{2}\cup D^{2}$, we deduce that
$$K^4=K^2\cup \lbrace z^{2}\rbrace\cup\lbrace z^{-2}\rbrace=1 \cup D\cup \lbrace z^{2}\rbrace\cup\lbrace z^{-2}\rbrace.$$

Let us see that $K^4=D^2$. We know that $D^2$ is a $G$-invariant set, so we can write $D^{2}=1\cup A_{1}\cup \cdots \cup A_{r}$ with $A_{i}$ a conjugacy class for $1\leq i \leq r$. On the other hand, since $xK\subseteq K^2=1\cup D$ then $xK=1\cup D'$ with $D'\subseteq D$ and similarly $x^{-1}K=1\cup D''$ with $D''\subseteq D$. Thus, $D'D''\subseteq K^{2}\cap D^{2}$ and $|D'D''|\geq |D'|=|K|-1\geq 2$. We conclude that there exists $1\neq g\in K^{2}\cap D^{2}$. As a result, $g\in D$. Also, $g\in A_{i}$ for some $1\leq i \leq r$. Consequently, $D=A_{i}$ and hence $D\subseteq D^{2}$. Accordingly, $K^{4}=1\cup D\cup D^{2}=D^{2}$, as wanted. Therefore, $$D^{2}=1 \cup D\cup \lbrace z^{2}\rbrace\cup\lbrace z^{-2}\rbrace.$$ We distinguish two subcases depending on whether $z^2 \in K^2$ or $z^2 \not \in K^2$.

\begin{enumerate}[label=\alph*), leftmargin=5mm]
\item If $z^2 \in K^2$, we have either $z^2=1$ or $z^2\in D$ (and $z^{-2}\in D$). In both cases, it follows that $D^2=K^2=1\cup D$, and then $\langle D\rangle$ is $p$-elementary abelian for some prime $p$ by applying Lemma 1(b). Furthermore, $\langle D\rangle=\langle K^2\rangle$, so $|\langle K\rangle/ \langle D\rangle|\leq 2$ by Lemma 1(a). Observe that $(x^2)^G=D$, so $o(x)$  divides $2p$ and hence, either $o(x)=p$ or $o(x)=2p$. If $o(x)=p>2$, then $\langle x\rangle =\langle x^2\rangle$ and $\langle K\rangle=\langle D\rangle$ is $p$-elementary abelian. Let us prove that if $o(x)=2p$, then $p$ is odd. If $p=2$, then $o(x)=4$ and we know that $2^{a}=|\langle D\rangle|=1+|D|$ for some $a>1$. Thus, $|D|=2^{a}-1=|K|$ is odd. As $K$ is real, then $o(x)=2$, a contradiction. Thus, we obtain the assertion of 2.b.\\

\item Suppose that $z^2\not \in K^2$, what is equivalent to say that either $\lbrace z^{2}\rbrace$ and $\lbrace z^{-2}\rbrace$ are central classes or $\lbrace z^{2}, z^{-2}\rbrace$ is a single conjugacy class of cardinality $2$. The rest of the proof consists in getting a contradiction by a series of steps. \\

\textit{Step 1: $\langle D\rangle/\langle z^2\rangle$ is a $2$-elementary abelian group. Moreover, $\langle D\rangle$ is nilpotent of class at most 2. Therefore, we write $\langle D\rangle=P\times H$ with $P\in$ {\rm Syl}$_{2}(\langle D\rangle)$ and $H$ a $2$-complement of $\langle D\rangle$ with $H\subseteq \langle z^2\rangle \subseteq$ {\rm \textbf{Z}}$(\langle D\rangle)$.}\\

By the hypotheses of $b)$, it follows that $\langle z^2\rangle\unlhd G$. We denote $\overline{G}=G/\langle z^2\rangle$ and consider $\overline{D}$. We have $\overline{D}^2=\overline{D^2}=\overline{1}\cup \overline{D}$. By Lemma 1(b), $\langle \overline{D}\rangle$ is $p$-elementary abelian for some prime $p$. Observe that if $d\in D$, then $d=z^g$, for some $g\in G$, and $d^2=(z^g)^2=(z^2)^g\in \lbrace z^2, z^{-2}\rbrace$. Thus, $\overline{d}^2=\overline{d^2}=1$ and $p=2$. Furthermore, $z\in {\rm\textbf{C}}_{G}(z^2)\unlhd G$ and hence, $D \subseteq {\rm \textbf{C}}_{G}(z^2)$. This means that $\langle z^2\rangle \subseteq {\rm \textbf{Z}}(\langle D\rangle)$, so $\langle D\rangle/{\rm \textbf{Z}}(\langle D \rangle)$ is abelian and $\langle D\rangle$ is nilpotent of class at most $2$. The decomposition for $\langle D\rangle$ of the statement certainly holds. \\

\textit{Step 2: We can assume that $o(z^2)$ is even.}\\

If $z^2\in {\rm \textbf{Z}}(G)$, since $z\in D$ and $D$ is real, we have that $z^2$ and $z^{-2}$ are conjugate and hence $z^2=z^{-2}$. Thus, $o(z^2)=2$. Consequently, we can assume that $\lbrace z^{2}, z^{-2}\rbrace$ is a conjugacy class of cardinality $2$ for the rest of this step. Suppose that $o(z^2)=k$ is odd and notice that  $o(z)=2k$, so we can write $z=z^{k}z^2$ where $z^k$ and $z^2$ are the $2$-part and the $2'$-part of $z$, respectively. Moreover, there exists $g\in G$ such that $(z^2)^g=z^{-2}$. We know that $zz^g\in D^2=1\cup D\cup \lbrace z^2, z^{-2}\rbrace$. Also, $zz^g=z^kz^2(z^k)^gz^{-2}=z^k(z^k)^g\in P$ by taking into account that $P\unlhd G$, so $zz^g$ is a $2$-element. As a consequence, $zz^g$ can only be equal to $1$, $z^2$ or $z^{-2}$ because the elements of $D$ have order $2k$. Now, if $zz^g=1$, then $z^{-1}=z^g=(z^k)^g(z^2)^g=(z^k)^gz^{-2}$ what means that $z=(z^k)^g$, a contradiction. If $zz^g$ is equal to either $z^2$ or $z^{-2}$ we can easily compute that $o(z)=4$, again a contradiction. Thus, $o(z^2)$ must be even.\\

\textit{Step 3: $\langle D\rangle=\langle z^2\rangle \cup D\langle z^2\rangle$ and $\langle D\rangle$ has just one element of order $2$ that is the involution of $\langle z^2\rangle $.}\\

Since $D\subseteq D^2$, it is easy to prove by induction on $k$ that for every $k\geq 2$, $D^{k-1}\subseteq D^{k}\subseteq 1\cup D \cup \langle z^2\rangle\cup D\langle z^2\rangle$. We can deduce that there exists $l\in \mathbb{N}$, depending on the order of $z$, such that $\langle D\rangle =D^{l}\subseteq 1\cup D \cup \langle z^2\rangle\cup D\langle z^2\rangle\subseteq \langle D \rangle$. This yields to $$\langle D\rangle=1\cup D \cup \langle z^2\rangle\cup D\langle z^2\rangle=\langle z^2\rangle \cup D\langle z^2\rangle.$$

Accordingly, it is enough to show that there exists no element $dz^{2i}\in Dz^{2i}$ with $d\in D$ such that $o(dz^{2i})=2$. Otherwise, we assume $(dz^{2i})^2=d^2z^{4i}=1$ and notice that $d^2=(z^g)^2=(z^2)^g\in \lbrace z^{2}, z^{-2}\rbrace$ for some $g\in G$. Consequently, either $z^{4i+2}=z^{2(2i+1)}=1$ or $z^{4i-2}=z^{2(2i-1)}=1$. In both cases, $o(z^2)$ would be odd, which contradicts Step 2. As a result, the unique element of order $2$ in $\langle D\rangle$ is the involution of $\langle z^2\rangle$.\\

\textit{Step 4: Final contradiction.}\\

By Step 3 and Lemma 3, $P$ must be cyclic or generalized quaternion. We will get a contradiction in both cases. Assume first that $P$ is cyclic. Since $\langle \overline{D}\rangle=\langle D\rangle/ \langle z^2 \rangle\cong P/P\cap \langle z^2\rangle$ is $2$-elementary abelian by Step 1 and $P$ is cyclic, then either $\langle \overline{D}\rangle\cong \mathbb{Z}_{2}$ or $\langle \overline{D}\rangle$ is trivial. Furthermore, $\langle z\rangle\neq \langle z^2\rangle$. Otherwise, ${\rm \textbf{C}}_{G}(z)={\rm \textbf{C}}_{G}(z^2)$ and then $|K|=|D|=|(z^2)^G|$ would be either $1$ or $2$, contradicting the fact that $|K|\geq 3$. Thus, $\langle z^2\rangle\subset\langle z\rangle \subseteq\langle D\rangle$ and this forces that $\langle D\rangle=\langle z\rangle$. As the elements of $D$ have the same order that $z$, which is even, this equality implies that they are odd powers of $z$ and, as a consequence, the elements of $D^2$ are even powers of $z$. This contradicts that $D\subseteq D^2$.\\

From now on, $P$ can be assumed to be generalized quaternion. We denote $\widetilde{G}=G/H$ and then $\langle \widetilde{D}\rangle=\widetilde{\langle D\rangle}\cong P$. Notice that ${\rm \textbf{Z}}(P)\cong {\rm \textbf{Z}}(\langle \widetilde{D}\rangle)={\rm \textbf{Z}}(\langle D\rangle/H)={\rm \textbf{Z}}(\langle D\rangle)/H$, because $\langle D\rangle=P\times H$, and that $\langle \widetilde{D}\rangle/{\rm \textbf{Z}}(\langle \widetilde{D}\rangle)$ is dihedral. Also, $\langle \widetilde{D}\rangle/{\rm \textbf{Z}}(\langle \widetilde{D}\rangle)\cong \langle D\rangle/{\rm\textbf{Z}}(\langle D\rangle)$ is $2$-elementary abelian by Step 1. By joining both facts, we conclude that $\langle \widetilde{D}\rangle/\langle {\rm \textbf{Z}}(\widetilde{D})\rangle \cong \mathbb{Z}_{2}\times \mathbb{Z}_{2}$. Therefore, $\langle \widetilde{D}\rangle\cong Q_{8}$ and $\langle D\rangle \cong Q_{8}\times H$. Note that $\langle  \widetilde{z}^2\rangle\subseteq {\rm \textbf{Z}}(\langle \widetilde{D}\rangle)$ which has order 2. Then the order of $\langle  \widetilde{z}^2\rangle$ is either $1$ or $2$. If $\langle  \widetilde{z}^2\rangle$ is trivial, from Step 3, we get $$\langle \widetilde{D}\rangle=\langle \widetilde{z}^2\rangle \cup \widetilde{D}\langle  \widetilde{z}^2\rangle=\widetilde{1}\cup \widetilde{D}$$ and by Lemma 1(b), $\langle \widetilde{D}\rangle$ is elementary abelian, a contradiction. Thus, we can assume that $o(\widetilde{z}^2)=2$. Again, by Step 3, $$\langle \widetilde{D}\rangle=\widetilde{1}\cup \widetilde{D}\cup \widetilde{D}\widetilde{z}^2 \cup \{ \widetilde{z}^2\},$$
and we distinguish two cases. When $\widetilde{D}$ and $\widetilde{D}\widetilde{z}^2$ are equal or distinct. If $\widetilde{D}\neq \widetilde{D}\widetilde{z}^2$, then $8=2+2|\widetilde{D}|=2(1+|\widetilde{D}|)$, what means that $|\widetilde{D}|=3$. The fact that $\widetilde{D}$ is real forces that $o(\widetilde{z})=2$, a contradiciton. Therefore, $\widetilde{D}=\widetilde{D}\widetilde{z}^2$ and $8=|\langle \widetilde{D}\rangle|=2+|\widetilde{D}|$, so $|\widetilde{D}|=6$. Now we prove that $|\widetilde{K}|=|\widetilde{D}|$. We have $\widetilde{K}^2=\widetilde{1}\cup \widetilde{D}$. Since $o(\widetilde{z}^2)=2$ and $\widetilde{x}^2$ and $\widetilde{z}$ are $\widetilde{G}$-conjugate, we know that $o(\widetilde{x})>2$. So, $\widetilde{D}=(\widetilde{x}^2)^{\widetilde{G}}$ and, since ${\rm \textbf{C}}_{\widetilde{G}}(\widetilde{x})\subseteq {\rm \textbf{C}}_{\widetilde{G}}(\widetilde{x}^2)\subseteq \widetilde{G}$, we conclude that $|\widetilde{D}|$ divides $|\widetilde{K}|$. As $6=|\widetilde{D}|\leq |\widetilde{K}|\leq |\widetilde{K}^2|=1+|\widetilde{D}|$, we get $|\widetilde{D}|=|\widetilde{K}|$, as wanted.\\

On the other hand, by taking into account that $\widetilde{K}$ is a real class and Lemma 1(a), we know that $\langle \widetilde{K}\rangle/\langle \widetilde{K}^2\rangle=\langle \widetilde{K}\rangle/\langle \widetilde{D}\rangle$ is trivial or cyclic of order $2$. In the former case, that is, if $\langle \widetilde{K}\rangle=\langle \widetilde{D}\rangle=\widetilde{1}\cup \lbrace \widetilde{z}^2\rbrace\cup \widetilde{D}$, we see that this leads to a contradiction. We know that $\widetilde{1}\neq \widetilde{x}\in \langle \widetilde{K}\rangle$. If $\widetilde{x}=\widetilde{z}^2$, then $o(\widetilde{x})=2$. Then $x^2\in H$, which implies that $z\in H$ and $o(z^2)$ is odd, a contradiction. So $\widetilde{x}\in \widetilde{D}$ and we can write $x=dh$ with $d\in D$ and $h\in H\subseteq \langle z^2\rangle\subseteq {\rm \textbf{Z}}(\langle D\rangle)$. Then $x^2=d^2h^2\in \langle z^2\rangle$ and we conclude that $z=(x^2)^g\in \langle z^2\rangle$ and, as a result, $\langle z\rangle=\langle z^2\rangle$. So, ${\rm \textbf{C}}_{G}(z)={\rm \textbf{C}}_{G}(z^2)$ and $|D|=|(z^2)^G|=2$, which contradicts that $|K|\geq 3$. We can assume then that $\langle \widetilde{K}\rangle/\langle \widetilde{D}\rangle \cong \mathbb{Z}_{2}$. Therefore, $\langle \widetilde{K}\rangle $ is  a 2-group of order 16, which has a normal subgroup isomorphic to $Q_8$, and moreover,  $\langle \widetilde{K}\rangle$ possesses  at least  6 elements of order 8 (the elements of $\widetilde{K}$). However, the only groups of order 16 having a normal subgroup isomorphic to $Q_8$ are: $SD_{16}$, the semidihedral group; $Q_{16}$, the generalized quaternion group; the central product of $D_8$ and $\mathbb{Z}_4$; and the direct product $Q_8 \times \mathbb{Z}_2$. The former two groups posses exactly $4$ elements of order $8$ and the latter two groups have no elements of order $8$. All cases give a contradiction.\qed
\end{enumerate}
\end{proof}

{\bf Examples} Let us show several examples of each case of Theorem A. In some of them, we use the {\sc SmallGroups}  library of {\sf GAP} \cite{GAP}. The $m$-th group of order $n$ in this library is identified by $n\#m$. \\

{\it Case 1.a.}  We take $G$ the group $$M_{2^{n+1}}=\langle a, \, \, b  \, \,\vert  \, \, a^{2^n}=b^2=1, \, \, a^b=a^{2^{n-1}+1}\rangle$$ with $n\geq 3$. We consider the conjugacy class $K=b^G$ that satisfies $K^2=1\cup D$ where $D=(a^{2^{n-1}})^{G}$.\\

{\it Case 1.b.} We consider $G=D_{2n}=\langle a, \, \, b  \, \,\vert  \, \, a^{n}=b^2=1, \, \, a^b=a^{-1}\rangle $ with $n\geq 3$ and $K=a^G$. Then $K^{2}=1\cup D$ where $D=(a^2)^G$. Remark that if $n$ is odd, then $\langle D\rangle=\langle K\rangle$ whereas if $n$ is even, $|\langle K\rangle/ \langle D\rangle|=2$.\\

{\it Case 2.a.} Let $N=\langle x_{1}\rangle \times \cdots \times \langle x_{r} \rangle=\mathbb{Z}_{2}\times \cdots \times \mathbb{Z}_{2}$ and consider the natural action of $S_{r}$ on $N$, that is, $G=NS_{r}$ is the wreath product of $N$ and $S_{r}$. In this case, $K=\lbrace x_{1}, \cdots, x_{r}\rbrace$ is a conjugacy class of $G$ such that $K^{2}=1 \cup D$ where $D=\lbrace x_{i}x_{j}\vert i\neq j\rbrace$ is a conjugacy class, because $S_{r}$ acts transitively on $D$, and $o(x_{i}x_{j})=2$ for every $i\neq j$ and $|K|=r$. This is an example of case 2.a of Theorem A in which $\langle D\rangle$ is $2$-elementary abelian and $|\langle K\rangle/\langle D\rangle|=2$.\\

The alternating group $A_{4}$ is another example where $K$ is the conjugacy class of involutions. In this case, $\langle K\rangle=\langle D\rangle$.\\

Let $G=216\#88=\langle a,\, \, b, \, \, c\, \, \mid c^3=1, \, \, a^4=1, \, \, a^2=b^2, \, \, a^b=a^{-1}, \, \, c^{-1}a^{-1}ca^{-1}bcb=1, \, \,$\\ $c^{-1}a^{-1}c^{-1}b^{-1}ca^{-1}b=1, \, \, b^{-1}ca^{-1}c^{-1}abc=1\rangle \cong ((\mathbb{Z}_3 \times \mathbb{Z}_3) \rtimes \mathbb{Z}_3) \rtimes Q_8.$ The conjugacy class $K=(a^2)^G$ satisfies that $K^2=1 \cup D$ where $D=c^G$. Moreover, $o(a^2)=2$, $o(c)=3$, $|K|=9$ and $|D|=24$. This is an example of case 2.a in which $\langle D\rangle$ is a non-abelian extraspecial $3$-group of order $27$ and exponent 3. \\

{\it Case 2.b.} Let  $\langle a\rangle \cong \mathbb{Z}_{5}$ and let $\langle b\rangle \cong \mathbb{Z}_{8}$ acting on $\langle a\rangle$ by $a^b=a^2$. Let $G$ be the associated semidirect product $\langle a\rangle \rtimes \langle b\rangle$ and take $K=(b^4a)^G$. We have $K^2=1\cup D$ where $D=a^G$, $o(b^4a)=10$, $o(a)=5$, $|K|=4$ and $|D|=4$. This shows case 2.b of Theorem A with $\langle D\rangle\cong \mathbb{Z}_{5}$ and $\langle K\rangle\cong \mathbb{Z}_{10}$.\\

We get another example for the case in which the order of the elements of $K$ is a prime. Take $G=(\mathbb{Z}_3 \times \mathbb{Z}_3) \rtimes Q_8\cong {\rm PSU}(3,2)=\langle a, \, \, b, \, \, c, \, \, d\, \, | a^4=c^3=d^3=1, \, \, a^2=b^2, \, \, a^b=a^{-1}, \, \, c^a=cd^2, \, \, d^a=c^2d^2, \, \, c^b=d, \, \, d^b=c^2\rangle$ and $K=c^{G}$, with $c$ an element of order $3$. This class satisfies that $K^2=1\cup K$ with $|K|=8$. Furthermore, $\langle K\rangle \cong \mathbb{Z}_3\times \mathbb{Z}_3$. \\

Observe that both examples satisfy $|K|=|D|$, as it is explicitely showed in the proof of Theorem A.\\

Now we prove the characterization of the property stated in Theorem A in terms of irreducible characters. For our purposes, we use the following result which characterizes when the product of two conjugacy classes is again a conjugacy class.

\begin{lemma}
Let $G$ be a group and let $a, b, c \in G$ be nontrivial elements of $G$. The following conditions are equivalent:
\begin{enumerate}[label=\alph*)]
\item $a^Gb^G=c^G$
\item $\chi(a)\chi(b)=\chi(c)\chi(1)$ for all $\chi \in$ Irr$(G).$
\end{enumerate}
\end{lemma}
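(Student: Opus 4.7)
The plan is to pass to the center $Z(\mathbb{C}G)$ of the complex group algebra, whose natural basis is the set of class sums $\widehat{K}=\sum_{x\in K} x$. For every $\chi\in$ Irr$(G)$, the associated central character $\omega_{\chi}:Z(\mathbb{C}G)\to \mathbb{C}$, defined on a basis by $\omega_{\chi}(\widehat{K})=|K|\chi(x)/\chi(1)$ for any $x\in K$, is an algebra homomorphism, and the family $\{\omega_{\chi}\}_{\chi\in\text{Irr}(G)}$ separates the points of $Z(\mathbb{C}G)$ (since the direct sum map $Z(\mathbb{C}G)\to \bigoplus_{\chi}\mathbb{C}$ is an algebra isomorphism). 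These are the only tools I will need.

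For the implication (a) $\Rightarrow$ (b), I would start from $a^{G}b^{G}=c^{G}$. The product $\widehat{a^{G}}\,\widehat{b^{G}}$ lies in $Z(\mathbb{C}G)$, and its support (as an element of $\mathbb{C}G$) is contained in $c^{G}$. Because the element is central and $c^{G}$ is a single $G$-orbit, all coefficients over $c^{G}$ must coincide, so $\widehat{a^{G}}\,\widehat{b^{G}} = m\widehat{c^{G}}$ for some positive integer $m$; applying the augmentation map gives $m=|a^{G}||b^{G}|/|c^{G}|$. Applying $\omega_{\chi}$ to both sides and clearing the common factors of $|a^{G}|,|b^{G}|,|c^{G}|$ yields exactly $\chi(a)\chi(b)=\chi(c)\chi(1)$.

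For (b) $\Rightarrow$ (a), I would multiply the identity in (b) by $|a^{G}||b^{G}|/\chi(1)^{2}$, thus obtaining, for every $\chi$,
$$\omega_{\chi}(\widehat{a^{G}})\,\omega_{\chi}(\widehat{b^{G}}) \;=\; \frac{|a^{G}||b^{G}|}{|c^{G}|}\,\omega_{\chi}(\widehat{c^{G}}).$$
Since the $\omega_{\chi}$ separate $Z(\mathbb{C}G)$, this forces the algebra equality $\widehat{a^{G}}\,\widehat{b^{G}} = (|a^{G}||b^{G}|/|c^{G}|)\,\widehat{c^{G}}$. Reading off the supports on both sides in $\mathbb{C}G$ immediately gives $a^{G}b^{G}=c^{G}$.

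The only subtle point, which I regard as the main obstacle, is the bookkeeping at the start of the first direction: one must exploit that $\widehat{a^{G}}\widehat{b^{G}}$ is central (and not merely $G$-invariant as a multiset) in order to conclude that the multiplicity $m$ is constant on $c^{G}$; once this is observed, everything else reduces to manipulating the defining formula of $\omega_{\chi}$. A self-contained alternative, which I would mention but not use, is to invoke the classical class-multiplication formula $a_{ijk}=(|K_{i}||K_{j}|/|G|)\sum_{\chi}\chi(x_{i})\chi(x_{j})\overline{\chi(x_{k})}/\chi(1)$ combined with the second orthogonality relation for characters; this computes the structure constants directly and reaches the same conclusion, but the central-character route is cleaner.
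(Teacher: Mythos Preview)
Your argument is correct. The paper itself does not prove this lemma: it simply refers the reader to Lemma~1 of Moori--Tong-Viet \cite{Moori}, so there is no in-paper argument to compare against. The central-character route you outline is the standard one and works exactly as you describe; the point you flag as subtle---that centrality of $\widehat{a^{G}}\,\widehat{b^{G}}$ in $\mathbb{C}G$ forces a constant multiplicity on the single class $c^{G}$, even though hypothesis~(a) is only a statement about the set product---is genuine but immediate, since any element of $Z(\mathbb{C}G)$ has constant coefficients on each conjugacy class.
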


\begin{proof}
See for instance Lemma 1 of \cite{Moori}.\qed
\end{proof}

\begin{proof}[of Theorem B]
Suppose that $K^2=1\cup D$ and let $\chi \in$ Irr$(G)$. Notice that $K$ is real. By applying problem 3.12 of \cite{Isaacs}, $$\chi(x)^2=\chi(x)\chi(x^{-1})=\frac{\chi(1)}{|G|}\sum_{g\in G}\chi(x(x^{-1})^g).$$
We can divide the sum into two parts, so the above formula is equal to
$$\frac{\chi(1)}{|G|}(\sum_{g\in {\rm \textbf{C}}_{G}(x)}\chi(x(x^{-1})^g)+\sum_{g\in G\setminus {\rm \textbf{C}}_{G}(x)}\chi(x(x^{-1})^g))=$$
$$\frac{\chi(1)}{|G|}(|{\rm \textbf{C}}_{G}(x)|\chi (1)+(|G|-|{\rm \textbf{C}}_{G}(x)|)\chi(d)).$$

We obtain b) by simply multiplying by $|K|$.\\

Suppose now that b) holds. Again by problem 3.12 of \cite{Isaacs} we have that for every $h\in G$,
$$\frac{\chi(1)}{|G|}\sum_{g\in G}\chi(x(x^{h})^g)=\chi(x)\chi(x^h)=\chi(x)^2=\frac{\chi(1)}{|G|}(|{\rm \textbf{C}}_{G}(x)|\chi(1)+(|G|-|{\rm \textbf{C}}_{G}(x)|)\chi(d)).$$
Thus,
\begin{equation}\label{1}
\sum_{g\in G}\chi(x(x^{h})^g)=|{\rm \textbf{C}}_{G}(x)|\chi(1)+(|G|-|{\rm \textbf{C}}_{G}(x)|)\chi(d).
\end{equation}
Let $h\in G$ and suppose that $xx^h\not \in D$. We will prove that $xx^h=1$. By Eq.(\ref{1}) and taking into account the second ortogonality relation,

$$\sum_{g\in G}\sum_{\chi \in {\rm Irr}(G)}\chi(x(x^{h})^g)\overline{\chi(d)}=|{\rm \textbf{C}}_{G}(x)|\sum_{\chi \in {\rm Irr}(G)}\chi(1)\overline{\chi(d)}+(|G|-|{\rm \textbf{C}}_{G}(x)|)\sum_{\chi \in {\rm Irr}(G)}\chi(d)\overline{\chi(d)}=$$
$$(|G|-|{\rm \textbf{C}}_{G}(x)|)|{\rm \textbf{C}}_{G}(d)|.$$
On the other hand, since $xx^h\not \in D$, then
$$\sum_{g\in {\rm {\bf C}}_{G}(x^h)}\sum_{\chi \in {\rm Irr}(G)}\chi(x(x^{h})^g)\overline{\chi(d)}+\sum_{g\in G\setminus {\rm {\bf C}}_{G}(x^h)}\sum_{\chi \in {\rm Irr}(G)}\chi(x(x^{h})^g)\overline{\chi(d)}=$$
$$\sum_{g\in G\setminus {\rm {\bf C}}_{G}(x^h)}\sum_{\chi \in {\rm Irr}(G)}\chi(x(x^{h})^g)\overline{\chi(d)}.$$
Therefore,
$$\sum_{g\in G\setminus {\rm {\bf C}}_{G}(x^h)}\sum_{\chi \in {\rm Irr}(G)}\chi(x(x^{h})^g)\overline{\chi(d)}=(|G|-|{\rm \textbf{C}}_{G}(x)|)|{\rm \textbf{C}}_{G}(d)|.$$
Again by using the second ortogonality relation, we deduce that there are exactly $(|G|-|{\rm \textbf{C}}_{G}(x)|)$ elements $g\in G\setminus {\rm {\bf C}}_{G}(x^h)$ such that
$$\sum_{\chi \in {\rm Irr}(G)}\chi(x(x^{h})^g)\overline{\chi(d)}=|{\rm \textbf{C}}_{G}(d)|.$$
So, for every $g\in G\setminus {\rm {\bf C}}_{G}(x^h)$, we have $x(x^h)^g\in D$. Now, if we come back to Eq.(\ref{1}), we have
$$\sum_{g\in {\rm {\bf C}}_{G}(x^h)}\chi(x(x^{h})^g)+\sum_{g\in G\setminus {\rm {\bf C}}_{G}(x^h)}\chi(x(x^{h})^g)=|{\rm \textbf{C}}_{G}(x)|\chi(1)+(|G|-|{\rm \textbf{C}}_{G}(x)|)\chi(d)\, \, \, \, \, \, \forall \chi \in {\rm Irr}(G).$$
As a result,
$$|{\rm {\bf C}}_{G}(x^h)|\chi(xx^h)+(|G|-|{\rm \textbf{C}}_{G}(x)|)\chi(d)=|{\rm \textbf{C}}_{G}(x)|\chi(1)+(|G|-|{\rm \textbf{C}}_{G}(x)|)\chi(d)\, \, \, \, \, \, \forall \chi \in {\rm Irr}(G).$$
This implies that $\chi(xx^h)=\chi(1)$ for every $\chi \in {\rm Irr}(G)$, that is, $xx^h=1$. Therefore, we have proved that for every $h\in G$, either $xx^h=1$ or $xx^h\in D$, that is, $K^2\subseteq 1\cup D$. Since $K^2$ is a $G$-invariant set, the only possibilities are $K^2=D$ or $K^2=1\cup D$. However, if $K^2=D$, by Lemma 5, we have $\chi(x)^2=\chi(1)\chi(d)$ and by replacing in the equation of b), we get $|K|\chi(1)\chi(d)=\chi(1)^2+(|K|-1)\chi(1)\chi(d)$ for every $\chi \in$ Irr$(G)$. This forces that $\chi(d)=\chi(1)$ for every $\chi \in$ Irr$(G)$, so $d=1$, a contradiction. Then $K^2=1\cup D$, as wanted. \qed
\end{proof}

\begin{proof}[of Corollary C]
For every non central element $x\in G$, we know that $(x^G)^2=1\cup D$ for some conjugacy class $D$. Then $x^2\in \langle D\rangle$ and $\langle D\rangle$ is nilpotent by Theorem A. Thus, $x^2\in {\rm \textbf{F}}(G)$ for every $x\in G$. Consequently, $G/{\rm\textbf{F}}(G)$ is $2$-elementary abelian.\qed
\end{proof}

\begin{proof}[of Corollary D]
It may occur that $K^2$ is a conjugacy class and then, by applying Theorem 2, $\langle K\rangle$ is solvable. Otherwise, it happens  that either $K^2= A_1 \cup A_2 \cup \cdots \cup A_n$ or $K^2= A_1 \cup A_2 \cup \cdots \cup A_n \cup D$  with $A_i$ a central classs for every $i$ and $D$ a non-central class. If we consider $\overline{G}=G/{\rm \textbf{Z}}(G)$, it follows that either $\overline{K}^2=\overline{1}$ or $\overline{K}^2=\overline{1}\cup \overline{D}$. In the former case, $\langle \overline{K}\rangle$ is cyclic of order $2$ and as a consequence, $\langle K\rangle$ is solvable. In the second case, by applying Theorem A, $\langle \overline{K}\rangle=\langle K\rangle {\rm \textbf{Z}}(G)/{\rm \textbf{Z}}(G)$ is solvable, so $\langle K\rangle$ is solvable too.\qed
\end{proof}

\begin{proof}[of Corollary E]
We can easily prove that the hypotheses are inherited by factor groups and we work by induction on the order of $G$. If ${\rm\textbf{O}}_{\pi'}(G)\neq 1$ it easily follows by induction that $\overline{G}=G/{\rm \textbf{O}}_{\pi'}(G)$ is nilpotent. So we can assume that ${\rm\textbf{O}}_{\pi'}(G)=1$. For every $p\in \pi$, we choose $1\neq x_{p}\in {\rm \textbf{Z}}(P)$ for some $P\in$ Syl$_{p}(G)$. The hypotheses imply that $(x_{p}^{G})^2$ is a conjugacy class and, by applying Theorem 2, $|x_{p}^{G}|=|[x_{p}, G]|$ and hence, $[x_{p}, G]$ has $p'$-order. \\

Let $K_{p'}/[x_{p}, G]:={\rm \textbf{O}}_{\pi'}(G/[x_{p}, G]))$ which is a $p'$-group. Since $[x_{p}, G]$ is $p'$-group, then $K_{p'}\subseteq {\rm \textbf{O}}_{p'}(G)$. By induction, $$G/[x_{p}, G]/{\rm \textbf{O}}_{\pi'}(G/ [x_{p}, G])\cong G/K_{p'}$$ is nilpotent. Now, we consider the natural homomorphism $$\phi: G\longrightarrow G/K_{p_{1}'}\times \cdots \times G/K_{p_{s}'} $$ where $\pi=\lbrace p_{1}, \cdots, p_{s}\rbrace$. Since $\bigcap_{i=1}^sK_{p_{i}'}\subseteq \bigcap_{i=1}^{s}{\rm \textbf{O}}_{p_{i}'}(G)={\rm \textbf{O}}_{\pi'}(G)=1$, we conclude that $\phi$ is injective and thus, $G$ is nilpotent.\qed
\end{proof}

\begin{proof}[of Corollary F]
The hypotheses are inherited by quotients. Let us see that we can assume ${\rm \textbf{Z}}(G)=1$. Indeed, if we consider $\overline{G}=G/{\rm \textbf{Z}}(G)$ we have $${\rm \textbf{O}}_{2}(\overline{G}/ {\rm \textbf{F}}(\overline{G}))={\rm \textbf{O}}_{2}(\overline{G}/\overline{{\rm \textbf{F}}(G)})\cong{\rm \textbf{O}}_{2}(G/{\rm \textbf{F}}(G))=M/{\rm \textbf{F}}(G)\cong \overline{M}/\overline{{\rm \textbf{F}}(G)}.$$ If ${\rm \textbf{Z}}(G)>1$, arguing by induction on the order of $G$, we obtain that $\overline{G}/\overline{M}\cong G/M$ is nilpotent and then the theorem is proved. Thus, we can assume ${\rm \textbf{Z}}(G)=1$, as wanted.\\

We assume first that there exists a $2'$-element $x\in G\setminus {\rm \textbf{F}}(G)$ such that $(x^{G})^2$ is not a class. However, $(x^G)^2$ is union of conjugacy classes, all of them central except at most one. As ${\rm\textbf{Z}}(G)=1$, we have $(x^{G})^{2}=1\cup D$, where $D$ is a non-central class of $G$. By Theorem A, we conclude that $x^2\in \langle D\rangle \subseteq {\rm \textbf{F}}(G)$ and, since $\langle x\rangle=\langle x^2\rangle$, we get a contradiction.\\

Therefore, if we consider $\widehat{G}=G/{\rm \textbf{F}}(G)$, we can assume that every non-trivial $2'$-element  $\widehat{x}\in \widehat{G}$ satisfies that $(\widehat{x}^{\widehat{G}})^2$ is a conjugacy class of $\widehat{G}$. Observe that we can certainly assume that $x$ is a $2'$-element of $G$ such that $x\not\in {\rm \textbf{F}}(G)$ and that $(x^G)^2$ is a conjugacy class. We apply Corollary E with $\pi=\{2\}'$ in order to deduce that $\widehat{G}/{\rm \textbf{O}}_{2}(\widehat{G})$ is nilpotent, which implies that $G/M$ is nilpotent, and the proof is finished.\qed
\end{proof}

\end{document}